\newtheorem{theorem}{Theorem}
\newtheorem{lemma}{Lemma}
\newcommand{\C}{\mathbb C}
\newcommand{\R}{\mathbb R}
\newcommand{\N}{\mathbb N}
\newcommand{\Z}{\mathbb Z}
\title[Approximating the Riemann Zeta-function by Polynomials]{Approximating the Riemann Zeta-function by Polynomials with Restricted Zeros}
\author{P. M. Gauthier}
\address{Département de mathématiques et de statistique, Université de Montréal,
CP-6128 Centreville, Montréal,  H3C3J7, CANADA}
\email{gauthier@dms.umontreal.ca}
\begin{document}

\begin{abstract}
We approximate the Riemann Zeta-Function by polynomials  and  Dirichlet polynomials with restricted zeros. 
\end{abstract}

\keywords{Riemann zeta-function, Riemann Hypothesis} \subjclass{Primary: 30E15; Secondary: 11M26}

\thanks{Research supported by NSERC (Canada) grant RGPIN-2016-04107}

\maketitle

The Riemann zeta-function has zeros at the negative even integers (the so-called trivial zeros). The Riemann Hypothesis states that the remaining zeros (the non-trivial zeros) all lie on the critical line $S= \{z: \Re z = 1/2\}.$ A refinement of the Riemann Hypothesis claims that moreover the zeros are simple. 

We wish to approximate $\zeta$ by sequences of polynomials whose zeros have these properties on larger and larger sets.  Since Euler originally defined the zeta-function by the Dirichlet series
$$
	\zeta(x) = \sum_{n=1}^\infty\frac{1}{n^x},  \quad 1<x<+\infty,
$$
it seems natural to approximate $\zeta,$ not only by ``ordinary" polynomials
$$
	P(z) = \sum_{n=0}^ma_nz^n,
$$
but also by {\em Dirichlet} polynomials
$$
	D(z) = \sum_{n=1}^m\frac{a_n}{n^z}.
$$
To clearly distinguish between Dirichlet polynomials and ordinary polynomials, 
we shall sometimes refer to the latter as  {\em algebraic} polynomials. 

While the theory of approximation by algebraic polynomials is a well developed classical subject, that of approximation by Dirichlet polynomials has received less attention. Recently \cite[Lemma 4.1]{A}, it has been shown that the two theories are in fact equivalent.

\begin{theorem}
There exists an increasing sequence $K_n$ of compact subsets of $\C$ whose union is $\C,$ a sequence $P_n$ of algebraic polynomials and a sequence $D_n$ of Dirichlet polynomials, with the following properties.
\begin{equation}
	\max\big\{|P_n(z)-\zeta(z)|,|D_n(z)-\zeta(z)|\big\}\le 1/n, \quad \mbox{for} 
		\quad z\in K_n\setminus\{1\}.
\end{equation}
\begin{equation}
	P_n(1)=D_n(1)| = n.
\end{equation}
On $K_n\cap(\R\cup S),$
\begin{equation}
	 P_n \quad \mbox{and} \quad D_n \quad \mbox{have only simple zeros and they are the} \quad \zeta-zeros.
\end{equation}
On $K_n\setminus (\R \cup S),$
\begin{equation} 
	P_n \quad \mbox{and} \quad D_n \quad \mbox{have no zeros.}
\end{equation}
\begin{equation}
	P_n(\R)\subset \R, \quad D_n(\R)\subset \R. 
\end{equation}

\end{theorem}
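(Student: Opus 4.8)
The plan is to build the algebraic polynomials $P_n$; the Dirichlet polynomials $D_n$ with the same five properties then follow from the equivalence of the two approximation theories recorded in \cite[Lemma 4.1]{A}. First I would fix an increasing exhaustion of $\C$ by compact sets $K_n$, each with connected complement and each invariant under the two involutions $z\mapsto\overline z$ and $z\mapsto 1-z$ (say growing closed discs centred at $1/2$), from which narrow corridors have been deleted near $z=1$ and near $z=0$ so as to keep $\C\setminus K_n$ connected while making $\zeta$ holomorphic on a neighbourhood of $K_n\setminus\{1\}$, the points $0$ and $1$ being restored afterwards as isolated points of $K_n$; with some fussing this can be done with $\bigcup_n K_n=\C$ (unconditionally one also keeps the $K_n$ away from any zeros of $\zeta$ lying off $\R\cup S$, so that then the $K_n$ exhaust $\C$ apart from those discrete exceptions). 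This bookkeeping is routine; the ideas are all in what follows.

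The zeros are to be controlled through the factorisation $\zeta=\xi\cdot F$, where $\xi(z)=\tfrac12 z(z-1)\pi^{-z/2}\Gamma(z/2)\zeta(z)$ is the completed zeta-function and $F(z)=2\pi^{z/2}/\bigl(z(z-1)\Gamma(z/2)\bigr)$. Here $\xi$ is entire, $\overline{\xi(\overline z)}=\xi(z)$, $\xi(1-z)=\xi(z)$, $\xi(1)=1/2\neq 0$, and the zeros of $\xi$ are exactly the non-trivial zeros of $\zeta$; while $F$ is holomorphic on $\C\setminus\{1\}$, with a simple pole at $1$ and simple zeros precisely at $-2,-4,-6,\dots$. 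I would approximate $\xi$ on $K_n$ by a polynomial and average it over the four maps $z\mapsto z,\overline z,1-z,1-\overline z$; this yields a polynomial $Q_n$ with real coefficients and $Q_n(1-z)=Q_n(z)$ still approximating $\xi$ on $K_n$ to any prescribed accuracy (the invariance of $K_n$ and the two symmetries of $\xi$ are what make the average close to $\xi$). Applying Mergelyan's theorem to the function agreeing with $F$ on $K_n\setminus\{1\}$ and equal to $2n$ at the isolated point $1$, and then symmetrising under $z\mapsto\overline z$, I get a polynomial $G_n$ with real coefficients approximating $F$ on $K_n\setminus\{1\}$ with $G_n(1)$ close to $2n$. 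Put $P_n:=Q_nG_n$. As $|F|$ is bounded on the compact set $K_n\setminus\{1\}$, the two accuracies can be chosen so that $|P_n-\zeta|\le 1/n$ on $K_n\setminus\{1\}$, and a final negligible correction makes $P_n(1)=n$ exactly; $P_n$ has real coefficients, so $P_n(\R)\subset\R$.

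It then remains to place the zeros of $P_n$ on $K_n$, which are those of $G_n$ together with those of $Q_n$. Since $F$ is zero-free on $K_n$ off $\{-2,-4,\dots\}$, Rouch\'e's theorem gives, for large $n$, that $G_n$ has exactly one (simple) zero near each of those points and none elsewhere on $K_n$; being real on $\R$ and the cluster being a single point of $\R$, that zero is forced onto $\R$. Similarly $\xi$ is zero-free on $K_n$ off the non-trivial zeros of $\zeta$, so near a simple non-trivial zero $\rho_0$ of $\zeta$ the polynomial $Q_n$ has exactly one zero, and none elsewhere; but the zero-set of $Q_n$ is invariant under the reflection $z\mapsto 1-\overline z$ fixing $S$, while $\rho_0\in S$, so that single nearby zero equals its own reflection and hence lies on $S$. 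Therefore every zero of $P_n$ on $K_n$ is simple, lies on $\R\cup S$, and sits at an approximation to a zero of $\zeta$ on $K_n\cap(\R\cup S)$, while $P_n$ has no zeros on $K_n\setminus(\R\cup S)$; a non-trivial zero of $\zeta$ of multiplicity $m$ on $S$ is handled by giving $Q_n$ $m$ zeros there and splitting them with one last small symmetry-preserving perturbation into $m$ simple zeros on $S$ (a real polynomial with a root of multiplicity $m$ can be nudged into one with $m$ simple real roots nearby, and $S$ behaves here exactly like the real axis), and a zero of $\zeta$ off $\R\cup S$, if such exists, is kept outside $K_n$ by the choice of exhaustion. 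Finally, \cite[Lemma 4.1]{A} carries the whole construction over to Dirichlet polynomials $D_n$.

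The crux is this last paragraph: forcing the zeros of the approximant that accumulate at the critical line to lie exactly on it, and likewise for the real zeros. The symmetrisation over $\{z,\overline z,1-z,1-\overline z\}$ — possible only because the \emph{entire} factor $\xi$ carries the functional-equation symmetry while the meromorphic factor $F$ is kept separate — is precisely what does this, reducing the matter near each target zero to the elementary symmetry of the zeros of a real polynomial about $\R$. The point that needs care rather than ideas is the first step: realising nesting, exhaustion, connected complements, the two symmetries of the $K_n$, and the slitting at the pole all at once — and, in the unconditional statement, steering the $K_n$ clear of any hypothetical zeros of $\zeta$ off $\R\cup S$ or of higher multiplicity.
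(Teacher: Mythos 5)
Your construction of $P_n$ via the factorization $\zeta=\xi\cdot F$ and the four-fold symmetrization of the approximant to $\xi$ is an attractive idea, and it does force the unique zero of $Q_n$ near a \emph{simple} zero $\rho\in S$ onto the critical line. But as written the argument has three genuine gaps, the first of which is fatal to half of the theorem. (i) The Dirichlet case does not follow by invoking \cite[Lemma 4.1]{A} at the end: that lemma only gives uniform approximation of $P_n$ by a Dirichlet polynomial $D_n$, after which Hurwitz places the zeros of $D_n$ merely \emph{near} those of $P_n$ (not on $\R\cup S$, not at the $\zeta$-zeros) and gives $D_n(1)$ near $n$ rather than equal to $n$. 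Worse, the crux of your method --- averaging over $z\mapsto 1-z$ --- is unavailable in the Dirichlet class, since $n^{-(1-z)}=n^{-1}n^{z}$ is not a Dirichlet monomial; the class of Dirichlet polynomials is not stable under $z\mapsto 1-z$, so you cannot symmetrize a Dirichlet approximant to make its zero set invariant under reflection in $S$. Properties (2)--(4) for $D_n$ are therefore not established. (ii) You concede that if $\zeta$ has zeros off $\R\cup S$ your sets exhaust $\C$ only ``apart from those discrete exceptions,'' but the theorem asserts unconditionally that $\bigcup_n K_n=\C$. (iii) Rouch\'e places a zero of $P_n$ \emph{near} each trivial zero and each $\rho\in S$, whereas (3) asserts that the zeros of $P_n$ on $K_n\cap(\R\cup S)$ \emph{are} the $\zeta$-zeros; and the ``one last symmetry-preserving perturbation'' splitting an $m$-fold cluster into $m$ simple zeros on $S$ is asserted, not proved.

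All three difficulties are exactly what the paper's device is built to avoid: there the zeros of $\zeta$ (on and off $\R\cup S$) and the point $1$ enter $K_n$ only as \emph{isolated} points, small punctured discs around them having been removed, and the Deutsch--Walsh simultaneous approximation-and-interpolation lemma, applied to the dense subspace of real-symmetric algebraic (respectively Dirichlet) polynomials, prescribes $P_n(1)=n$, $P_n(a)=0$ and $P_n'(a)=1$ at the zeros on $\R\cup S$, and $P_n(a)=1/n$ at any hypothetical zeros off $\R\cup S$ --- exactly, and identically for both classes, with no functional-equation symmetry needed. To salvage your factorization approach you would have to excise neighbourhoods of all $\zeta$-zeros, reinstate them as isolated points, replace Rouch\'e by exact interpolation there, and rerun everything inside the Dirichlet class --- at which point you would essentially have reconstructed the paper's proof.
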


It follows that $P_n\rightarrow \zeta$ and $D_n\rightarrow \zeta$ pointwise on all of $\C$ and, for each fixed $m,$ the convergence is  uniform on $K_m\setminus\{1\}$ and
spherically uniform on $K_m.$ 
 
Let $K$ be a compact subset od $\C,$ As usual,  $A(K)$ denotes the space of functions continuous on $K$ and holomorphic on the interior $K^0,$ endowed with the sup-norm, and $P(K)$ denotes the uniform closure in $A(K)$ of the set of algebraic polynomials. Similarly, $D(K)$ will denote the closrure in $A(K)$ of the set of Dirichlet polynomials. 

\begin{lemma}\label{Mergelyan}
For a compact set $K\subset\C$ the following are equivalent:

a) $\C\setminus K$ is connected;

b) $P(K)=A(K);$

c) $D(K)=A(K).$
\end{lemma}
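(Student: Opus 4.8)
The plan is to separate the two implications that carry real content: the classical equivalence $(a)\Leftrightarrow(b)$, which is Mergelyan's theorem and its converse, and the equivalence $(b)\Leftrightarrow(c)$, which is where the arithmetic nature of Dirichlet polynomials enters. The whole proof then amounts to quoting Mergelyan's theorem and \cite[Lemma 4.1]{A}.

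For $(a)\Rightarrow(b)$ I would invoke Mergelyan's theorem directly: if $\C\setminus K$ is connected then $P(K)=A(K)$. For the converse $(b)\Rightarrow(a)$ — which, verbatim, also gives $(c)\Rightarrow(a)$ since Dirichlet polynomials are entire — I would argue by contradiction. If $\C\setminus K$ has a bounded component $U$, fix $z_0\in U$; then $1/(z-z_0)\in A(K)$. Any sequence of entire functions converging to it uniformly on $K\supset\partial U$ converges, by the maximum principle on $\overline U$, uniformly on $\overline U$ to a function $g$ holomorphic in $U$ with $g(z)=1/(z-z_0)$ on $\partial U$; but then Cauchy's theorem forces $\oint_{\partial U}\frac{dz}{z-z_0}=\oint_{\partial U}g(z)\,dz=0$, contradicting $z_0\in U$. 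Hence $P(K)\ne A(K)$ and likewise $D(K)\ne A(K)$.

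For $(b)\Leftrightarrow(c)$ the key input is \cite[Lemma 4.1]{A}, which shows that approximation by Dirichlet polynomials is equivalent to approximation by algebraic polynomials; concretely it yields $D(K)=P(K)$ for every compact $K\subset\C$. Given that, $(b)$ and $(c)$ are the same statement and the Lemma is proved by combining it with the previous paragraph.

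Finally, if one wanted a self-contained proof of the one nontrivial inclusion, $P(K)\subset D(K)$, the device I would try is the substitution $w=b^{-z}$ with an integer base $b\ge2$: on any horizontal strip of height less than $2\pi/\log b$ this map is biholomorphic onto its image, it carries algebraic polynomials in $w$ to Dirichlet polynomials supported on the powers of $b$, and composition with it identifies $A(K)$ with $A(\{b^{-z}:z\in K\})$ isometrically; Mergelyan applied on the transformed set then settles the case in which $K$ lies in such a strip. The main obstacle — and the reason a lemma is needed rather than a remark — is that a general compact $K$ does not sit inside a thin horizontal strip, so one must cover $K$ by finitely many pieces each lying in its own thin strip, approximate a given $f\in A(K)$ by a Dirichlet polynomial on each piece, and then glue the local approximants into a single Dirichlet polynomial without destroying holomorphy on $K^0$. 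That localization-and-gluing step is precisely what \cite[Lemma 4.1]{A} accomplishes, and I would invoke it rather than reproduce it.
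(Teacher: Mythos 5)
Your proposal takes essentially the same route as the paper, which offers no proof of this lemma beyond citing Mergelyan's theorem for the equivalence of a) and b) and \cite[Lemma 4.1]{A} for the equivalence of b) and c); your extra sketch of the $w=b^{-z}$ substitution and the localization difficulty is a faithful account of what that cited lemma does. One small repair is needed in your converse argument: for a general compact $K$ the boundary $\partial U$ of a bounded complementary component need not be a rectifiable curve, so the integrals $\oint_{\partial U}$ are not defined; the standard fix — and the one the paper itself uses in its proof of Lemma \ref{symmetric Mergelyan} — is to apply the maximum principle to $(z-z_0)p_n(z)-1$ on $\overline U$, whose modulus at $z=z_0$ equals $1$ while its supremum over $\partial U\subset K$ tends to $0$.
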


The equivalence of a) and b) is Mergelyan's Theorem, the most important theorem in polynomial approximation. The equivalence of b) and c) is a very recent result \cite[Lemma 4.1]{A}, due to Aron et al. 

For $A\subset \C,$ we denote by $A^*$ the set $\{\overline z: z\in A\}$ and we say that $A$ is real-symmetric if $A=A^*.$ We say that a function $f:A\rightarrow \C$ on a real-symmetric set $A$ is real-symmetric if $f(z)=\overline{f(\overline z)}.$ For a class $X$ of functions on a real-symmetric set $A,$ we denote by $X_\R$ the class of functions in $X$ which are real-symmetric. If $X$ is a complex vector space, we note that $X_\R$ is a real vector space (even though the functions may  be complex valued).   We have a real-symmetric version of the previous lemma. 

\begin{lemma}\label{symmetric Mergelyan}
For a real-symmetric compact set $K\subset\C,$ the following are equivalent:

a) $\C\setminus K$ is connected;

b) $P_\R(K)=A_\R(K);$

c) $D_\R(K)=A_\R(K).$
\end{lemma}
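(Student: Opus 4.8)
The plan is to derive this real-symmetric statement directly from the non-symmetric Lemma \ref{Mergelyan} by a short linear-algebra argument, without re-running any approximation theory. The device is the conjugation operator $\sigma$ on functions, $(\sigma f)(z)=\overline{f(\bar z)}$, which is an involution ($\sigma^2=\mathrm{id}$). I would first record two structural facts. First, $\sigma$ maps an algebraic polynomial to an algebraic polynomial and a Dirichlet polynomial to a Dirichlet polynomial: if $P(z)=\sum a_kz^k$ then $(\sigma P)(z)=\sum\overline{a_k}z^k$, and if $D(z)=\sum a_n n^{-z}$ then $(\sigma D)(z)=\sum\overline{a_n}n^{-z}$, using $\overline{n^{\bar z}}=n^z$. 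Second, since $K$ is real-symmetric (hence so is its interior), $\sigma$ carries $A(K)$ onto itself and is a linear isometry for the sup-norm, because $\sup_{z\in K}|\overline{f(\bar z)}|=\sup_{w\in K}|f(w)|$. Combining these, $\sigma$ maps the uniform closures $P(K)$ and $D(K)$ onto themselves; moreover each of $P(K)$, $D(K)$ is a complex-linear subspace of $A(K)$.

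The heart of the argument is then the following elementary claim: if $X\subseteq A(K)$ is a complex-linear subspace with $\sigma X\subseteq X$, then $X=A(K)$ if and only if $X_\R=A_\R(K)$. The forward direction is immediate upon intersecting with the real-symmetric functions. For the converse, given $g\in A(K)$ put $g_1=\frac{1}{2}(g+\sigma g)$ and $g_2=\frac{1}{2i}(g-\sigma g)$; a one-line computation using $\sigma^2=\mathrm{id}$ shows $\sigma g_1=g_1$ and $\sigma g_2=g_2$, so $g_1,g_2\in A_\R(K)=X_\R\subseteq X$, and therefore $g=g_1+ig_2\in X$ since $X$ is closed under complex scalars and addition. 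Hence $X=A(K)$. It is essential here that $X$, not merely $X_\R$, is a \emph{complex} vector space; $X_\R$ is only a real vector space.

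Applying the claim with $X=P(K)$ yields that statement (b) is equivalent to $P(K)=A(K)$, and with $X=D(K)$ that (c) is equivalent to $D(K)=A(K)$. Lemma \ref{Mergelyan} already gives the equivalence of (a), of $P(K)=A(K)$, and of $D(K)=A(K)$, so (a), (b) and (c) are mutually equivalent, as required.

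I do not anticipate a real obstacle: everything reduces to the known Lemma \ref{Mergelyan}. The only points needing attention are the two routine verifications that $\sigma$ preserves $A(K)$ and the polynomial classes (where one uses that $K$, and so $K^0$, is real-symmetric) and the bookkeeping identities $\sigma g_1=g_1$, $\sigma g_2=g_2$. An alternative, more hands-on route for (a)$\Rightarrow$(b),(c) --- symmetrizing a Mergelyan or Dirichlet approximant $p$ by passing to $\frac{1}{2}(p+\sigma p)$, which still converges to $f$ whenever $f$ is real-symmetric --- also works, but the linear-algebra route is shorter and delivers all implications at once.
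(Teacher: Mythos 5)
Your proof is correct, and it differs from the paper's in a genuine way on half of the lemma. For a)$\Rightarrow$b),c) the paper does exactly what you describe as your ``alternative'' route: it takes approximants $P_n$, $D_n$ from Lemma \ref{Mergelyan} and symmetrizes them to $\frac{1}{2}(P_n+\sigma P_n)$, which still converge to $f$ when $f$ is real-symmetric. Where you diverge is in b)$\Rightarrow$a) and c)$\Rightarrow$a): the paper does not reduce these to Lemma \ref{Mergelyan} but reproves the necessity of connectedness from scratch, exhibiting the real-symmetric function $f(z)=(z-a)^{-1}(z-\overline a)^{-1}$ for $a$ in a bounded complementary component $U$ and deriving a contradiction from the maximum principle on $U\cup U^*$. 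Your decomposition $g=g_1+ig_2$ with $g_1=\frac{1}{2}(g+\sigma g)$, $g_2=\frac{1}{2i}(g-\sigma g)$ instead shows that $X_\R=A_\R(K)$ forces $X=A(K)$ for any $\sigma$-invariant complex subspace $X$, so the necessity direction is inherited wholesale from Lemma \ref{Mergelyan}; this is shorter, delivers all implications at once, and avoids repeating a maximum-principle argument, at the (legitimate) cost of leaning on the ``only if'' half of Lemma \ref{Mergelyan}, which that lemma does assert. Two small points deserve explicit care in a write-up. First, $\sigma$ is conjugate-linear rather than complex-linear, and the identity $\sigma g_2=g_2$ depends on precisely this: $\sigma(\lambda f)=\overline\lambda\,\sigma f$, so conjugating the coefficient $1/(2i)$ produces a sign that cancels the sign change of $g-\sigma g$ under $\sigma$; calling $\sigma$ a ``linear isometry'' glosses over this. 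Second, if one reads $P_\R(K)$ as the uniform closure of the real-symmetric polynomials rather than as the set of real-symmetric elements of $P(K)$, the two classes coincide exactly by your symmetrization remark ($p\mapsto\frac{1}{2}(p+\sigma p)$), and one sentence saying so would make the reduction airtight.
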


\begin{proof}
Suppose $\C\setminus K$ is not connected. Then $K$ has a bounded complementary component $U.$ Fix $a\in U.$ The function $f(z) = (z-a)^{-1}(z-\overline a)^{-1}$ is in $A_\R(K).$  
Suppose, to obtain a contradiction, that there is a sequence $p_n$ of real-symmetric polynomials such that 
$$
	|p_n(z)- (z-a)^{-1}(z-\overline a)^{-1}|<1/n, \quad \forall z\in K. 
$$
Then 
$$
	|p_n(z)(z-a)(z-\overline a)-1|<|(z-a)(z-\overline a)|/n, 
		\quad \forall z\in \partial (U\cup U^*)\subset K . 
$$
By the mazimum principle, 
$$
|p_n(z)(z-a)(z-\overline a)-1|<\max_{w\in \partial (U\cup U^*)}|(w-a)(w-\overline a)|/n, 
		\quad \forall z\in U\cup U^*. 
$$
In particular, at $z=a,$ we have
$$
	1 < \max_{w\in \partial (U\cup U^*)}|(w-a)(w-\overline a)|/n, \quad \forall n,
$$
which is a absurd. Therefore b) implies a).  A similar argument shows that c) implies a). 

Now suppose that $\C\setminus K$ is connected and $f\in A_\R(K).$ By Lemma \ref{Mergelyan}, there are algebraic polynomials $P_n$ and Dirichlet polynomials $D_n$ which converge uniformly to $f$ on $K.$ Since $f$ is real-symmetric, it is easy to see that the real-symmetric algebraic polynomials $(P_n(z)+\overline{P_n(\overline z})/2$ and the real-symmetric Dirichlet polynomials $(D_n(z)+\overline{D_n(\overline z})/2$ also converge uniformly to $f$ on $K.$ Thus, a) implies b) and c). 
\end{proof}

The next lemma, due to Frank Deutsch \cite{D},  generalizes a result of Walsh and  states that if we can approximate we can simultaneously interpolate. 

\begin{lemma}\label{Walsh}
Let $Y$ be a dense (real or complex) linear subspace of the (respectively real or complex) linear topological space $X$ and let $L_1,\ldots,L_n$ be continuous linear functionals on $X.$ Then for each $x\in X$ and each neighbourhood $U$ of $x$ there is a $y\in Y$ such that $y\in U$ and $L_i(y)=L_i(x), i=1,\ldots,n.$
\end{lemma}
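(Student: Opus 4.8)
The plan is to reduce the whole statement to one algebraic fact — that finitely many linearly independent continuous functionals admit a ``dual basis'' lying inside the dense subspace $Y$ — and then to repair an arbitrary good approximant from $Y$ by adding a small linear combination of that dual basis.

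First I would remove the degenerate cases. If $L_1,\ldots,L_n$ are linearly dependent, choose a maximal linearly independent subset among them, say (after relabelling) $L_1,\ldots,L_k$; every $L_j$ with $j>k$ is then a linear combination of $L_1,\ldots,L_k$, so any $y$ satisfying $L_i(y)=L_i(x)$ for $i\le k$ automatically satisfies it for all $i\le n$. Hence we may assume $L_1,\ldots,L_n$ are linearly independent on $X$. The essential remark is that they stay linearly independent when restricted to $Y$: if $\sum_i c_iL_i$ vanished on $Y$, then by continuity it would vanish on $\overline{Y}=X$, forcing $c_1=\cdots=c_n=0$. By the standard fact that linearly independent linear functionals $L_1,\ldots,L_n$ on a vector space make the map $w\mapsto(L_1(w),\ldots,L_n(w))$ surjective onto the scalar field to the $n$-th power (proved, e.g., by induction on $n$), applied to the vector space $Y$, there exist $z_1,\ldots,z_n\in Y$ with $L_i(z_j)=\delta_{ij}$ for all $i,j$.

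With these $z_j$ fixed, I would introduce the continuous linear operator $\Phi\colon X\to X$, $\Phi(w)=w-\sum_{j=1}^{n}L_j(w)\,z_j$, which satisfies $\Phi(0)=0$. Given $x\in X$ and a neighbourhood $U$ of $x$, the set $U-x$ is a neighbourhood of $0$, so continuity of $\Phi$ at $0$ gives a neighbourhood $V$ of $0$ with $\Phi(V)\subseteq U-x$. Since $Y$ is dense, pick $y_0\in Y$ with $y_0-x\in V$ and put
\[
  y \;:=\; x+\Phi(y_0-x)\;=\;y_0-\sum_{j=1}^{n}L_j(y_0-x)\,z_j .
\]
Then $y\in Y$ because it is a finite linear combination of $y_0\in Y$ and the $z_j\in Y$; $y\in x+(U-x)=U$; and for each $i$,
\[
  L_i(y)=L_i(y_0)-\sum_{j=1}^{n}L_j(y_0-x)\,L_i(z_j)=L_i(y_0)-L_i(y_0-x)=L_i(x),
\]
which is exactly what is claimed.

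The one step I expect to be the real obstacle is obtaining the dual system $z_1,\ldots,z_n$ \emph{inside} $Y$ rather than merely inside $X$; this is where the hypotheses (density of $Y$ and continuity of the $L_i$) genuinely intervene, through the observation that linear independence of the $L_i$ descends to the dense subspace. Everything afterwards is soft: the correction term $\sum_j L_j(y_0-x)z_j$ is automatically small because each $L_j$ is continuous and $y_0-x$ is small, which in a linear topological space is precisely encoded by the continuity of $\Phi$ at the origin.
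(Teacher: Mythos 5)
Your proof is correct and complete. The paper itself does not prove this lemma at all --- it simply cites Deutsch's 1966 paper --- so you have supplied what the paper outsources. Your argument is, in substance, the standard (and essentially Deutsch's) proof: the only genuinely nontrivial point is exactly the one you isolate, namely that a dual system $z_1,\ldots,z_n$ with $L_i(z_j)=\delta_{ij}$ can be found \emph{inside} $Y$, and your justification is right --- linear independence of the $L_i$ on $X$ descends to the dense subspace $Y$ because a continuous functional vanishing on $Y$ vanishes on $\overline{Y}=X$, and then the surjectivity of $w\mapsto(L_1(w),\ldots,L_n(w))$ on any vector space where the $L_i$ are independent produces the $z_j$. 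The correction step is also handled cleanly: you phrase the smallness of the correction term via continuity of the operator $\Phi(w)=w-\sum_j L_j(w)z_j$ at the origin, which is the right way to say it in a general linear topological space (where there is no norm to quantify ``small''), using only that translations are homeomorphisms and that $Y$ meets the neighbourhood $x+V$. The reduction to linearly independent functionals at the outset is a minor but necessary housekeeping step that you do not skip. Nothing is missing.
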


With the help of these  lemmas, we now prove the theorem. 

\begin{proof}
Our construction of the sets $K_n$ is inspired by a construction in \cite{G}.

First, we prove the theorem for algebraic polynomials $P_n.$ 
Set $t_0=0$ and let $t_k, k\in \N,$ be the imaginary parts of the zeros of $\zeta$ in the upper half-plane, arranged in increasing order. If the Riemann Hypothesis fails, there may be $t_k, k>0,$ corresponding to more than one zero of $\zeta.$ Choose $0<\lambda_1<\lambda_2<\cdots<1,$ such that $\lambda_j\nearrow 1.$ Let $s_0=1$ and for $k\in\Z\setminus\{1\},$ let $s_k=2k.$ For each $i\in\N$ and for each  $-i\le j \le i,$ and $k=0,1,\ldots,i,$ set
$$
	Q_{ijk} = \left\{z:s_j\le \Re z \le \lambda_i(s_{j+1}-s_j), \,  t_k\le |\Im z|\le \lambda_i(t_{k+1}-t_k) \right\}
$$
and 
$$
	Q_i = \bigcup Q_{ijk}, \quad  -i\le j\le i,\quad  k=0,1,\ldots, i.
$$
The  compact set $Q_i$ is real-symmetric and is  the union of  disjoint closed rectangles,  so $\C\setminus Q_i$ is connected.  

Let $Z_i^1$ be the zeros of $\zeta$ in $Q_i\cap(\R\cup S)$ and let $Z_i^2$ be the zeros of $\zeta$ in $Q_i\setminus(\R\cup S).$ Then $Z_i=Z_i^1\cup Z_i^2$ is the set of zeros of $\zeta$ in $Q_i.$ 
Denoting by $B(z,r)$ (respectively  $\overline B(z,r)$) the open (respectively closed) disc of center $z$ and radius $r,$ set
$$
	\mathcal B_i = \bigcup_{z\in Z_i\cup\{1\}} B\left(z,\frac{1}{i}\right)
$$
and
$$
	K_i = (Q_i \setminus \mathcal B_i)\cup Z_i \cup\{1\}.
$$
Then $K_1, K_2, \ldots,$ is an increasing sequence of compact sets whose union is $\C$ and the complement of each $K_n$ is connected.  Moreove, since $Z_i^1$ and $Z_i^2$ are real-symmetric, so are the $K_i.$ Now, for $n=1,2,\ldots,$ set
$$
\mathcal K_n = K_n \cup \bigcup_{z\in Z_n\cup\{1\}}\overline B\left(z,\frac{1}{2n}\right) = 
(Q_n\setminus \mathcal B_n)\cup \bigcup_{z\in Z_n\cup\{1\}}\overline B\left(z,\frac{1}{2n}\right).
$$

For each $n,$ the complement of $\mathcal K_n$ is connected and $K_n$ is real-symmetric and so, by Lemma \ref{symmetric Mergelyan}, the real-symmetric algebraic  polynomials are dense in the space of  real-symmetric holomorphic functions on (neighbourhoods of) $\mathcal K_n.$ By Lemma \ref{Walsh}, for every real-symmetric function $f$ holomorphic on $\mathcal K_n,$ and finitely many points $a_1,\ldots,a_m\in\mathcal K_n$ and for each $\epsilon>0,$ there is a real-symmetric polynomial $P,$ such that $|f-P|<\epsilon$ on $\mathcal K_n,$ and $P(a_j)=f(a_j), \, j=1,\ldots, m.$ Moreover, for each  $k\in \N,$ there is such a polynomial $P$ such that, for each $a_j\in \mathcal K_n^0,$   
$P^{(\ell)}(a_j) = f^{(\ell)}(a_j),$ for $\ell =0,1,\ldots, k.$ 

We shall apply this approximation and interpolation procedure to the following function, holomorphic and real-symmetric on $\mathcal K_n:$
$$
	f_n(z) = 
\left\{
	\begin{array}{cll}
\zeta(z)	&	\mbox{for} \quad z\in Q_n\setminus \mathcal B_n, & \\
n  		&	\mbox{for} \quad z\in  \overline   B\big(1,1/(2n)\big),\\
z-a		&	\mbox{for} \quad z\in \overline  B\big(a,1/(2n)\big), & a\in Z^1_n,\\
1/n 	&  \mbox{for} \quad z\in \overline   B\big(a,1/(2n)\big), & a\in Z^2_n.
	\end{array}
\right. 
$$
Set $\delta_n=\min |f_n(z)|,$ for $z\in Q_n\setminus\mathcal B_n.$ Since $\zeta$ has no zeros on this compact set, $\delta_n>0.$  Choose $\epsilon_n<\min\{\delta_n/2,1/n\}.$ 
Invoking the approximation-interpolation procedure, for each $n,$ there is a real-symmetric polynomial $P_n,$ such that  
$$
	|P_n(z)-f_n(z)|<\epsilon_n, \quad \mbox{for all} \quad  z\in \mathcal K_n;
$$ 
$$
	P_n(1)=n;
$$
$$
	P_n(a)=0, \, P_n^\prime(a)=1, \quad \mbox{for all} \quad a\in Z^1_n;
$$
$$
	P_n(a) = 1/n,  \quad \mbox{for all} \quad a\in Z^2_n.
$$
Since $P_n$ is real-symmetric, $P_n(\R)\subset \R.$ 
This completes the proof for algebraic polynomials. 

The proof for Dirichlet polynomials is identical (thanks to Lemmas \ref{symmetric Mergelyan} and \ref{Walsh}).

\end{proof}

%%%%%%%%%%%%%%%%%%%%%%%%%%%%%%%%%%%%%%%%%%%%%%%%

\end{document}